\newtheorem{cor}{Corollary}
\newtheorem{rem}{Remark}
\newtheorem{ex}{Example}
\def\mod{\!\pmod}
\title{Improved Bounds on  Diffsequences with Gaps in Powers of 2}
\author{Kanav Talwar\thanks{
                  Delhi Public School Faridabad, Faridabad, HR 121002, India}
        \and
        Utkarsh Gupta\thanks{
                      Department of Electrical and Computer Engineering, Northeastern University, Boston, MA 02115, USA
                  }
        }
\newtheorem{theorem}{Theorem}
\newtheorem{lemma}{Lemma}
\newtheorem{proposition}{Proposition}
\newtheorem{definition}{Definition}
\begin{document}
\newpage
\maketitle
\begin{abstract}
    Let $D$ be a set of positive integers. A $D$-diffsequence of length $k$ is a sequence of positive integers $a_1 <  \cdots < a_k$ such that $a_{i+1}-a_i\in D$ for $i=1,\ldots,k-1$. For $D=\{2^i\mid i\in \mathbb{Z}_{\ge 0}\}$, it is known that there exists a minimum integer $n$, denoted by $\Delta(D,k)$, such that every $2$-coloring of $\{1,\ldots n \}$ admits a monochromatic $D$-diffsequence of length $k$. In this work, we prove a new lower bound for $\Delta(D,k)$ to $\Delta(D,k)\ge \left(\sqrt{\frac{8k-5}{12}}-\frac12\right)2^{\left(\sqrt{\frac{8k-5}{3}}-3\right)}$, asymptotically improving the exponential constant in the bound proved by Clifton.
\end{abstract}



\section{Introduction}
An $r$-coloring of a set $A$ is a function $\chi : A \to \{0, \ldots, r-1\}$, where each element of $A$ is assigned one of $r$ distinct colors. We say a subset $A' \subseteq A$ is monochromatic under $\chi$ if $\chi$ is constant over $A'$. In 1927, Van der Waerden~\cite{Waerdan} showed that for any $r$-coloring of the positive integers, at least one color is such that there are monochromatic arithmetic progressions of arbitrary lengths. Equivalently, for any positive integer $k$, there exists a minimum integer $n$, such that for any $r$-coloring of $\{1,\ldots, n \}$, there must be a monochromatic arithmetic progression of length $k$. Motivated by generalizing this property to other sequences, Landman and Robertson~\cite{LR} considered the following question: Does an $r$-coloring of positive integers produce a monochromatic sequence of arbitrary length, whose gaps lie in some fixed set $D$? In particular, they introduced the notion of a $D$-diffsequence.
\begin{definition}
    Given a set $D$ of positive integers, a $D$-\textit{diffsequence} of length $k$ is a sequence of positive integers $a_1 < \dots < a_k$ such that $a_{i+1}-a_i\in D$ for $i \in \{1,\dots,k-1\}$.
\end{definition}
\begin{definition}
    A set $D$ is called $r$-accessible if every $r$-coloring of the positive integers contains arbitrarily long monochromatic $D$-diffsequences.
\end{definition}
Several works have sought to characterize the accessibility of canonical integer sets $D$ (see \cite{book}), such as the set of Fibonacci numbers \cite{LR, Wesley}, primes \cite{LR}, and fixed translates of primes \cite{LV10}, and powers of integers. Consider the set of powers of some integer $t \in \mathbb{Z}_{> 0}$, i.e. $D = \{ t^{i}   \mid i \in \mathbb{Z}_{\ge 0}\}$. For $t\ge 3$, there is a periodic coloring modulo $t-1$ that avoids arbitrarily long $D$-diffsequences. It is known that $D$ is only $2$-accessible when $t = 2$ \cite{book}. Analogous to Van der Waerden's inquiry, the problem of $r$-accessibility of a set $D$ can be rephrased to finding the minimum integer $n$, denoted by $\Delta(D,k;r)$, such that every $r$-coloring of $\{1, \ldots, n\}$ admits a monochromatic $D$-diffsequence of length $k$. 

\begin{definition}
    Let $\Delta(D,k;r)$ be the smallest positive integer $n$, such that for any $r$-coloring of $\{1, \ldots, n \}$, we can find a $k$ length monochromatic $D$-diffsequence. Let $\Delta(D,k) = \Delta(D,k;2)$.
\end{definition}

For the set $D$ of powers of $2$, i.e., $D = \{ 2^i \mid i \in \mathbb{Z}_{\ge 0}\}$, Landman and Robertson \cite{LR} showed that $D$ is $2$-accessible, with $\Delta(D,k) \le 2^k - 1$. Chokshi et. al. \cite{CCLS18} conjectured that $\Delta(D,k)$ can be lower bounded exponentially. Clifton \cite{Clifton} established a lower bound which is asymptotically $\mathcal{O}(2^{\sqrt{2k}})$. In particular,
\begin{equation}
    \Delta(D, k) \geq 2^{\sqrt{2k}} \left( \frac{(\sqrt{2} - 1)k}{8} - \frac{\sqrt{k}}{8} \right) + \frac{\sqrt{k}}{2}.
\end{equation}
In this work, we establish a new lower bound for $\Delta(D,k)$, asymptotically improving the exponential constant in Clifton's bound from $\mathcal{O}(2^{\sqrt{2k}})$ to $\mathcal{O}(2^{\sqrt{8k/3}})$. Our main result is stated as follows.  
\begin{theorem}\label{thm: MainThm}
    For $D=\{2^i\mid i\in\mathbb{Z}_{\ge0}\}$ we have
    \begin{equation}\label{Theone}
        \Delta(D,k)\ge \left(\sqrt{\frac{8k-5}{12}}-\frac12\right)2^{\left(\sqrt{\frac{8k-5}{3}}-3\right)}.
    \end{equation}
\end{theorem}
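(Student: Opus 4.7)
The plan is to prove the bound by exhibiting an explicit $2$-coloring of $\{1,\ldots, N\}$ with no monochromatic $D$-diffsequence of length $k$, where $N$ is the largest integer at most the right-hand side of \eqref{Theone}. This is the standard lower-bound strategy for such Ramsey-type problems, and the construction I have in mind refines the block-based coloring behind Clifton's bound.

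\textbf{Construction.} I would partition $\{1,\ldots,N\}$ into $m$ consecutive blocks $B_1,B_2,\ldots,B_m$ whose sizes roughly double: the $i$th block has length a specific power of $2$, so that the total length is on the order of $2^m$. In the baseline block construction, each $B_i$ receives a single color in an alternating fashion, and a careful accounting shows that any monochromatic $D$-diffsequence has length at most $\sim m^2/2$, which is the source of Clifton's exponent $\sqrt{2k}$. To beat this, I would further sub-divide each $B_i$ and use a more intricate internal two-coloring so that no monochromatic $D$-diffsequence can use more than roughly $3i/4$ points of $B_i$. Natural candidates for the internal pattern are two or three sub-blocks of carefully chosen sizes, colored so that any transition between them inside a single color class demands a gap that is not a power of $2$.

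\textbf{Analysis.} The length bound on a monochromatic $D$-diffsequence then comes from two sources: its length within each block $B_i$, and possible extensions across block boundaries via power-of-$2$ gaps. For the within-block contribution, the internal sub-coloring should force at most $\sim 3i/4$ same-color points in a diffsequence; summing over $i=1,\ldots,m$ gives the total $\sim 3m^2/8$. For cross-block extensions, the block sizes should be chosen so that only a few power-of-$2$ gaps connect compatible endpoints, and each such extension consumes one of the within-block ``savings''. Setting the total just under $k$ gives $m\le \sqrt{8k/3}$, and the total length $N\sim 2^{m}$ translates into the main exponential factor of \eqref{Theone}. The polynomial factor $\sqrt{(8k-5)/12}-\tfrac12$ and the constant $-3$ in the exponent would arise from tracking exact block-size sums and the floor/ceiling rounding needed to fit $m$ blocks inside $\{1,\ldots,N\}$.

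\textbf{Main obstacle.} The hard part is designing the internal sub-coloring and proving the $3i/4$ bound in every case. It is easy to find a sub-coloring that saves a constant factor on ``generic'' diffsequences, but one must rule out every possible arrangement of powers-of-$2$ gaps. The argument will likely require a case analysis on which sub-blocks of $B_i$ a diffsequence visits and on its entry and exit points, together with careful use of the fact that the power-of-$2$ constraint forbids certain jumps between sub-blocks. Achieving the tight constant $3/8$ in the final exponent---rather than something weaker such as $7/16$---requires jointly optimizing the ratio of sub-block sizes and the coloring pattern, and it is here that the improvement over the previously known bound will have to be squeezed out.
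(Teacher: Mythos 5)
There is a genuine gap: what you have written is a strategy outline, not a proof. The two load-bearing ingredients --- the explicit internal sub-coloring of each block $B_i$, and the verification that no monochromatic $D$-diffsequence can use more than roughly $3i/4$ points of $B_i$ while also controlling cross-block jumps --- are never specified. You flag this yourself as the ``main obstacle,'' but that obstacle \emph{is} the theorem; the surrounding arithmetic (summing $3i/4$ to get $3m^2/8 < k$, hence $m \lesssim \sqrt{8k/3}$ and $N \sim 2^m$) is a consistency check on the target bound, not progress toward it. It is also not clear that the proposed architecture can be made to work: since a $D$-diffsequence may jump by any power of $2$, it can connect points in far-apart blocks, and your treatment of these long jumps (``only a few power-of-$2$ gaps connect compatible endpoints, and each such extension consumes one of the within-block savings'') is precisely the kind of claim that requires the exhaustive case analysis you defer.

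For comparison, the paper does not refine a doubling-block construction at all. It defines a recursive substitution (``expansion'') sending $0\mapsto 0011$ and $1\mapsto 1100$, applies it $l-1$ times to the alternating string $1010\ldots$ of length $l$, and obtains a coloring of $[l\cdot 4^{l-1}]$. The analysis tracks \emph{hops} (color changes along a diffsequence): the key lemma shows that a diffsequence with at most $h$ hops in the expanded string pulls back, via $\mathrm{pos}(i)=\lceil i/4\rceil$, to a diffsequence with at most $h+1$ hops in the original string, losing at most $3h+2$ in length. Telescoping from $\psi_{\mathbf{s}}(l-1)=l$ down to $h=0$ gives the bound $(3l^2-3l+2)/2$ on monochromatic diffsequences, and optimizing $l$ yields \eqref{Theone}. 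The choice of the conjugate blocks $0011$ and $1100$ is exactly what makes the power-of-$2$ gap condition tractable (two indices in the expansion differing by $2^m\ge 4$ and carrying the same color must come from identically colored positions of the parent string), and nothing in your proposal plays this role. To complete your argument you would need to either supply and verify such a mechanism or adopt one like the paper's.
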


\section{Construction}\label{sec: construction}
We will begin by formally constructing the coloring function $\kappa$ used to show the bound in Theorem~\ref{thm: MainThm}. We define a mapping from some coloring of $l$ integers to a coloring of $4l$ integers, which we call an \textit{expansion}. The process of \textit{expansion} is then recursively used to construct the required coloring $\kappa$. To prove the lower bound in Theorem~\ref{thm: MainThm}, we explicitly define a $2$-coloring scheme of $\{1,\ldots, l \cdot  4^{l-1}\}$ which avoids any monochromatic diffsequences of length $k = (3l^2-3l+2)/2$.  For a coloring $\chi \colon\{1,\ldots, l\} \to \{0,1\}$, define its \textit{expansion} $\chi^{(1)}\colon \{1,\ldots, 4l\} \to \{0,1\}$ by setting, for $i \in \{1, \ldots,  l\}$ and $j \in [0,3]$, 
\begin{equation}
\chi^{(1)}(4i - j) =
\begin{cases}
\chi(i), & \text{if } j \in \{2,3\}, \\
1 - \chi(i), & \text{if } j \in \{0,1\}.
\end{cases}
\end{equation}
The coloring scheme is constructed recursively using the process of expansion. Repeating this procedure $r$ times, we eventually obtain a coloring of the first $l \cdot 2^{2r}$ positive integers, which we call $\chi^{(r)}$. To show the lower bound in Theorem~\ref{thm: MainThm}, begin with the trivial coloring: $\chi_{\mathrm{alt}} \colon \{1,\ldots, l\} \to \{0,1\}$ such that $\chi_{\mathrm{alt}}(i) = i \mod{2}$. Then, we will show that the coloring $\chi_{\mathrm{alt}}^{(l-1)} \colon [ l \, \cdot \, 4^{l-1}] \to \{0,1\}$ avoids monochromatic diffsequences of length $k$, i.e. $\kappa \equiv  \chi_{\mathrm{alt}}^{(l-1)}$.

\section{Preliminaries}\label{prev}
In this section, we will rephrase $2$-coloring functions, and the process of expansion to their more intuitive analogue, binary strings. For positive integers \( N \), by \(  [N] \) we mean the set \( \{1, 2, 3, \ldots, N\} \). By a 2-coloring of \([N] \) we mean a mapping \( \chi :[N] \to \{0, 1\} \). For the rest of this work, we will use the notation $D$ to denote the set of all non-negative integer powers of two, i.e., $D=\{2^i\mid i\in\mathbb{Z}_{\ge0}\}$.
Note that any $2$-coloring of $[n]$ can be represented as a binary string $\mathbf{s} = s_1 \ldots s_n$, where $s_i = \chi(i)$. For a string $\mathbf{s}$ corresponding to a given $2$-coloring of $[n]$, a monochromatic diffsequence $\mathcal{A} = \{a_i \}_{i=1}^{k}$ is such that $a_{i+1}-a_i \in D$ and $s_{a_{i+1}} = s_{a_i}$, i.e., $\chi(a_{i+1}) = \chi(a_i)$, for $i \in [k-1]$.
\begin{definition}\label{expand}
    Let $\mathbf{s}$ be a binary string of length $l$. The expansion of $\mathbf{s}$ is defined as a binary string $\mathbf{s}^{(1)}$ of length $4l$, obtained by replacing each $0$ in $\mathbf{s}$ by $0011$ and each $1$ in $\mathbf{s}$ by $1100$. The string $\mathbf{s}^{(r)}$ is recursively defined as the expansion of $\mathbf{s}^{(r-1)}$, where $r \ge 1$ and $\mathbf{s}^{(0)} = \mathbf{s}$.
\end{definition}


\begin{ex}\label{bigex}
    Consider the string $\mathbf{t}=1011$, its expansion is $\mathbf{t}^{(1)} = \overbrace{1100}_{\phantom{.}}\,\overbrace{0011}_{\phantom{.}}\,\overbrace{1100}_{\phantom{.}}\,\overbrace{1100}_{\phantom{.}}$. 
\end{ex}
In Lemma~\ref{MainLemma}, we will bound the length of certain diffsequences of an expanded string. Therefore, we wish to understand how diffsequences behave under expansion. Let $\mathbf{s}^{(1)}$ be the expansion of some string $\mathbf{s}$. The process of expansion is such that the $i^{th}$ bit of $\mathbf{s}^{(1)}$ is determined uniquely by the $\lceil i/4 \rceil^{th}$ bit of the string $\mathbf{s}$. This lets us determine the \textit{block}, either $0011$ or $1100$, to which the $i^{th}$ bit of $\mathbf{s}^{(1)}$ belongs. For convenience, we call this mapping the position of $i$.
\begin{definition}\label{pos}
    Let the position of $i \in \mathbb{Z}_{+}$ be defined as the function $\text{pos}(i)=\left\lceil i/4 \right\rceil$.
\end{definition}


The choice of $0011$ and $1100$ as the types of blocks used in expansion is motivated by the following two reasons: a) The length of blocks being $4$ implies that for any two bit indices $i$ and $i'$ in the expanded string $\mathbf{s}^{(1)}$ that differ by a power of $2$, i.e. $i'-i \in D$, their corresponding positions in the string $\mathbf{s}$ are such that $\mathrm{pos}(i') - \mathrm{pos}(i) \in D \, \cup \, \{0\}$; and b) For a monochromatic diffsequence $a_1< \ldots < a_k$ in the expanded string which is such that some of its consecutive elements lie in different blocks i.e. $s_{pos(a_{j+1})} \neq s_{pos(a_{j})}$, for some $j \in [k-1]$, we must have that $a_{j+1} - a_{j} \le 2$, i.e. they must lie in consecutive blocks. This is enabled by the fact that the blocks $0011$ and $1100$ are conjugates of each other. This will let us reduce monochromatic diffsequences in the expanded string $\mathbf{s}^{(1)}$ to two contiguous diffsequences of opposite colors in $\mathbf{s}$, partitioned by a $\textit{hop}$. A hop is defined to be an index in a diffsequence, at which the assigned color of the diffsequence changes.

\begin{definition}\label{def: hops}
Let $\mathbf{s} \in \{ 0,1 \}^n$ and $\mathcal{A}=\{a_i\}_{i=1}^k$ a $D$-diffsequence. A $\textit{hop}$ is defined as an index $j \in [k-1]$ such that $a_{j+1}-a_j=1$ and $s_{a_{j+1}}\neq s_{a_{j}}$. Define $H_{\mathcal{A}}$ to be the set of all hops in $\mathcal{A}$.
\end{definition}

\begin{definition}\label{def: Psi_psi}
For a string $\mathbf{s} \in \{ 0,1 \}^n$, define $\Psi_{\mathbf{s}}(h)$ to be the set of all diffsequences with at most $h$ hops. Formally, $ \Psi_{\mathbf{s}}(h): =\{\mathcal{A}=\{a_i\}_{i=1}^k 
    :\mathcal{A} \text{ is a } D\text{-diffsequence with } |H_{\mathcal{A}}|\le h\}$. Let $\psi_\mathbf{s}(h)$ denote the maximum length of any diffsequence in $\Psi_{\mathbf{s}}(h)$, and $\psi_\mathbf{s}(h) = 0$ if $\Psi_{\mathbf{s}}(h) = \emptyset$.
\end{definition}

\begin{rem}
    $\Psi_{\mathbf{s}}(0)$ is the set of all monochromatic diffsequences of $\mathbf{s}$. Therefore, if the length of $\mathbf{s}$ is greater than $\Delta(D,k)$, the maximum length of elements of $\Psi_{\mathbf{s}}(0)$ is at least $k$.
\end{rem}

\begin{ex}\label{hop example}
Consider the string $\mathbf{s}=1100001100111100$ of length $16$. The sequence of integers $\mathcal{A} = \{1,2,3,5,9\}$ is an example of a $D$-diffsequence. The corresponding subsequence of $\mathbf{s}$ is $s_1s_2s_3s_5s_9 = 11000$, and $H_{\mathcal{A}} = \{2\}$. Since this is the only hop in the diffsequence, $\mathcal{A} \in \Psi_{\mathbf{s}}(1)$.   
\end{ex}

The following proposition and its corollaries give us a recipe to reduce diffsequences with $h$ hops in some expanded string $\mathbf{s}^{(1)}$, to diffsequences with $h+1$ hops in $\mathbf{s}$. Proposition~\ref{prop: reduce_diffseq} shows that in the expanded string $\mathbf{s}^{(1)}$, if two positions $\{x,y\}$ of some diffsequence have the same color $c \in \{0, 1 \}$, and their corresponding positions $\{\text{pos}(x), \text{pos}(y) \}$ in $\mathbf{s}$ have different colors, then $\text{pos}(y)=\text{pos}(x)+1$ i.e. $x$ and $y$ lie in blocks next to each other. Moreover, we must have $s_{\text{pos}(x)}=1-c$, and $ s_{\text{pos}(y)}=c$. 

\begin{proposition}\label{prop: reduce_diffseq}
    For a binary string $\mathbf{s}$,
    let $x,y$ be positive integers such that $y-x\in D$ and $s^{(1)}_x=s^{(1)}_y$. If $s_{\text{pos}(x)}\neq s_{\text{pos}(y)}$, we must have $s_{\text{pos}(x)}=1-s^{(1)}_x$, $ s_{\text{pos}(y)}=s^{(1)}_y$, and $\text{pos}(y)=\text{pos}(x)+1$.
\end{proposition}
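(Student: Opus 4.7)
The plan is to unravel the block structure of the expansion and run a short case analysis. Write $c := s^{(1)}_x = s^{(1)}_y$, $p := \text{pos}(x)$, $q := \text{pos}(y)$, and let $j_x, j_y \in \{0,1,2,3\}$ satisfy $x = 4p - j_x$ and $y = 4q - j_y$. By the expansion rule (Definition~\ref{expand}), $s^{(1)}_x = s_p$ exactly when $j_x \in \{2,3\}$ and $s^{(1)}_x = 1 - s_p$ exactly when $j_x \in \{0,1\}$; the analogous statement holds for $y$. Since $y > x$ forces $q \ge p$ and the hypothesis $s_p \ne s_q$ excludes $q = p$, we have $q - p \ge 1$.

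First I would dispose of the case $s_p = c$, which also makes $s_q = 1-c$. Then $s^{(1)}_x = c = s_p$ gives $j_x \in \{2,3\}$, so $x \in \{4p-3, 4p-2\}$, and $s^{(1)}_y = c = 1 - s_q$ gives $j_y \in \{0,1\}$, so $y \in \{4q-1, 4q\}$. Subtracting yields $y - x \in \{4(q-p)+1,\, 4(q-p)+2,\, 4(q-p)+3\}$. With $q-p \ge 1$ each candidate is at least $5$, and each is either odd or equal to $2 \cdot (2(q-p)+1)$ with $2(q-p)+1 \ge 3$; in particular none is a power of two, contradicting $y - x \in D$.

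Hence $s_p = 1 - c$ and $s_q = c$, which by the same block analysis force $x \in \{4p-1, 4p\}$ and $y \in \{4q-3, 4q-2\}$, so $y - x \in \{4(q-p)-3,\, 4(q-p)-2,\, 4(q-p)-1\}$. If $q - p \ge 2$ the same parity argument rules out a power of two, leaving $q - p = 1$; then $y - x \in \{1,2,3\}$, which is achieved by the admissible values $1$ and $2$. This delivers the three conclusions $\text{pos}(y) = \text{pos}(x) + 1$, $s_{\text{pos}(x)} = 1 - c = 1 - s^{(1)}_x$, and $s_{\text{pos}(y)} = c = s^{(1)}_y$. The only non-routine ingredient is the parity observation that excludes powers of two in the residue classes $1, 2, 3 \pmod 4$ above $2$; once that is in place, the rest is bookkeeping about which of the four positions inside each block a given index occupies.
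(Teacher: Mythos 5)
Your proof is correct and rests on the same two facts as the paper's own argument: the position of an index within its length-$4$ block determines whether $s^{(1)}$ agrees with or complements $\mathbf{s}$ at the corresponding position, and no element of $D$ other than $1$ and $2$ lies in the residue classes $1,2,3 \pmod 4$. The paper organizes the case split by the size of the gap ($y-x\in\{1,2\}$ versus $y-x=2^m$ with $m\ge 2$, using the conjugacy of the blocks) while you split on whether $s_{\text{pos}(x)}$ equals $s^{(1)}_x$ and finish by a residue computation, but this is only a reorganization of the same bookkeeping, not a different method.
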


\begin{proof}
The condition $s_{\text{pos}(x)}\neq s_{\text{pos}(y)}$ means that $s_x^{(1)}$ and $s_y^{(1)}$ belong to different types of blocks, $0011$ and $1100$. Without loss of generality, assume that $s_x^{(1)} = s_y^{(1)}$ = 1. The proof is divided into two cases according to the value of $y-x$. (a) For $y-x \in \{1,2 \}$, we have $\text{pos}(y)-\text{pos}(x)\in \{0, 1\}$ which means that $s_x^{(1)}$ and $s_y^{(1)}$ belong either to the same block or consecutive blocks. However, since $s_x^{(1)}$ and $s_y^{(1)}$ must belong to different types of blocks, these must be consecutive.
Therefore $\text{pos}(y)-\text{pos}(x)=1$. The condition $s_x^{(1)} = s_y^{(1)} = 1$ is only possible when 
$s_x^{(1)} \in \{0011\}$, and $s_y^{(1)} \in \{1100\}$, which implies $s_{\text{pos}(x)}=0$ and $s_{\text{pos}(y)}=1$. (b) For $y-x=2^m$, $x \equiv y \mod{4}$ and therefore, $s_x^{(1)}$ and $s_y^{(1)}$ occupy the same position within their respective blocks. Since the two types of blocks, $0011$ and $1100$ are conjugates of each other, $s^{(1)}_{x} = s^{(1)}_{y}$ is only possible when the blocks are identical. This would mean that $s_{\text{pos}(x)}$ and $s_{\text{pos}(y)}$ are identical, which is a contradiction.
\end{proof}

 


Let $x_1<\dots<x_k$ be a monochromatic diffsequence of color $c \in \{0,1\}$ in the string $\mathbf{s}^{(1)}$, which is the expansion of some string $\mathbf{s}$. As noted earlier, $y-x \in D$, implies $\text{pos}(y)-\text{pos}(x) \in D \ \cup \{0\}$; the sequence characterized by the set $\{\text{pos}(x_i)\}_{i=1}^{k}$ is a $D$-diffsquence in $\mathbf{s}$. A consequence of Proposition~\ref{prop: reduce_diffseq} is that the substring $s_{\text{pos}(x_i)}$ can be split into two contiguous monochromatic substrings, the first with the color $1-c$ and the other with $c$. This result is proved formally in Corollary~\ref{A B split cor}. Therefore, a monochromatic diffsequence in $\mathbf{s}^{(1)}$ can be mapped to a diffsequence with one switch in color in $\mathbf{s}$, i.e., with one hop (Definition~\ref{def: hops}). This is a mapping between elements of $\Psi_{\mathbf{s^{(1)}}}(0)$ to elements of $\Psi_{\mathbf{s}}(1)$. In Corollary~\ref{pos bound cor}, we will establish that the set $\{\text{pos}(x_i)\}_{i=1}^{k}$ has at least $k-1$ distinct elements under certain conditions. These two corollaries together help in bounding $\psi_{\mathbf{s^{(1)}}}(0)$ in terms of $\psi_{\mathbf{s}}(1)$ (Definition~\ref{def: Psi_psi}) as $\psi_{\mathbf{s^{(1)}}}(0) \le \psi_{\mathbf{s}}(1) +2$.

\begin{cor}\label{A B split cor}
    Let $\mathbf{s}^{(1)}$ be an expansion of a binary string $\mathbf{s}$ and $x_1<\dots<x_k$ be a monochromatic $D$-diffsequence in $\mathbf{s}^{(1)}$ with color $c \in \{0,1\}$. Then, there exists a non negative integer $m\le k$ such that $s_{\text{pos}(x_j)}=1-c$ if and only if $j\le m$.
\end{cor}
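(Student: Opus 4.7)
The plan is to show that the binary sequence $\bigl(s_{\text{pos}(x_j)}\bigr)_{j=1}^k$ can change its value at most once and, moreover, that the only admissible transition between consecutive entries is from $1-c$ to $c$. Granted this monotonicity, it suffices to set $m$ to be the largest index $j$ with $s_{\text{pos}(x_j)} = 1-c$, taking $m=0$ if no such index exists.

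The entire content of the corollary reduces to a single invocation of Proposition~\ref{prop: reduce_diffseq}. For each $j \in [k-1]$, the pair $(x_j, x_{j+1})$ satisfies $x_{j+1}-x_j \in D$ and $s^{(1)}_{x_j} = s^{(1)}_{x_{j+1}} = c$. Hence either $s_{\text{pos}(x_j)} = s_{\text{pos}(x_{j+1})}$, or Proposition~\ref{prop: reduce_diffseq} applies and forces $s_{\text{pos}(x_j)} = 1 - s^{(1)}_{x_j} = 1-c$ together with $s_{\text{pos}(x_{j+1})} = s^{(1)}_{x_{j+1}} = c$. In particular, a $c \to 1-c$ transition between consecutive entries is impossible.

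I would then finish with a short contradiction argument: suppose for some $j_1 < j_2$ we had $s_{\text{pos}(x_{j_1})} = c$ but $s_{\text{pos}(x_{j_2})} = 1-c$. Walking along the indices $j_1, j_1+1, \ldots, j_2$, the value must flip from $c$ to $1-c$ at some step, contradicting the previous paragraph. Thus once the value $c$ first appears in the sequence, it persists for all later indices, which yields the claimed cutoff $m$. There is no real obstacle here — the corollary is essentially a repackaging of Proposition~\ref{prop: reduce_diffseq}, with the only mild bookkeeping being the edge cases $m=0$ (when every $s_{\text{pos}(x_j)}=c$) and $m=k$ (when every $s_{\text{pos}(x_j)}=1-c$).
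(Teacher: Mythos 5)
Your proposal is correct and follows essentially the same route as the paper: both reduce the claim to the observation, via Proposition~\ref{prop: reduce_diffseq}, that between consecutive elements the only possible change in $s_{\text{pos}(\cdot)}$ is from $1-c$ to $c$, and then conclude the sequence has at most one switch point. The only cosmetic difference is that you argue by contradiction over an arbitrary pair $j_1<j_2$ while the paper picks the first transition index directly; the content is identical.
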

\begin{proof}
    From Proposition~\ref{prop: reduce_diffseq} we have the following possibilities for each $i\le k-1$: $s_{\text{pos}(x_i)}=s_{\text{pos}(x_{i+1})}$ or $s_{\text{pos}(x_i)}=1-c$ and $s_{\text{pos}(x_{i+1})}=c$. Now, if there exists an index $j$ for which $s_{\text{pos}(x_j)}=1-c$ and $s_{\text{pos}(x_{j+1})}=c$ (if not, then we are already done), then pick the smallest such $j$. Thus, $s_{\text{pos}(x_i)}=1-c$ for all $i\le j$. We must have $s_{\text{pos}(x_i)}=c$ for all $i>j$, as otherwise there would exist an index $j^{\prime}$ such that $s_{\text{pos}(x_{j^{\prime}})}=c$ and $s_{\text{pos}(x_{j^{\prime}+1})}=1-c$, which is not possible. Hence, proving our claim.
\end{proof}

\begin{cor}\label{pos bound cor}
   Let $\mathbf{s}^{(1)}$ be an expansion of a binary string $\mathbf{s}$ and $x_1<\dots<x_k$ be a monochromatic $D$-diffsequence in $\mathbf{s}^{(1)}$ with color $c\in\{0,1\}$. Suppose that there exists $d\in\{0,1\}$ such that $s_{\text{pos}(x_i)}=d$ for all $1\le i\le k$. Then the set $\{\text{pos}(x_i)\}^k_{i=1}$ has at least $k-1$ distinct elements.
\end{cor}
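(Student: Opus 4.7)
The plan is to reduce the statement to counting equalities in the weakly increasing sequence $\text{pos}(x_1)\le\text{pos}(x_2)\le\cdots\le\text{pos}(x_k)$. Since the number of distinct values in $\{\text{pos}(x_i)\}_{i=1}^k$ equals $k$ minus the number of indices $i$ with $\text{pos}(x_i)=\text{pos}(x_{i+1})$, it suffices to prove that at most one such ``collision'' can occur.

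Two preliminary observations simplify the task. First, each block of $\mathbf{s}^{(1)}$ contains only two bits of color $c$, so at most two of the $x_i$'s can lie in any single block; in particular no three consecutive $x_i,x_{i+1},x_{i+2}$ can share a position. Second, if $\text{pos}(x_i)=\text{pos}(x_{i+1})=p$, then the hypothesis $s_p=d$ determines the two bits of color $c$ inside block $p$: they are $(4p-3,4p-2)$ when $c=d$ and $(4p-1,4p)$ when $c=1-d$, and in either case $x_{i+1}=x_i+1$.

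The heart of the argument is to show that a single collision locks the residue modulo $4$ of all subsequent $x_j$'s, preventing a second collision. Suppose $c=d$ and the collision occurs at index $a$, so $x_{a+1}=4p-2\equiv 2\pmod 4$. I would argue by induction on $j\ge a+1$ that $x_j\equiv 2\pmod 4$: writing $x_j=x_{j-1}+2^m$ with $x_{j-1}\equiv 2\pmod 4$, the element $x_j$ must still occupy a first-half position of a $d$-block (residue $1$ or $2$ modulo $4$), and the residue $1$ would force $2^m\equiv 3\pmod 4$, which is impossible. Hence every later $x_j$ lies at residue $2\pmod 4$ and cannot equal $4q-3$, so no second collision pair $(4q-3,4q-2)$ is available. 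The case $c=1-d$ is entirely symmetric: every later $x_j$ gets locked at residue $0\pmod 4$, ruling out a pair $(4q-1,4q)$.

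I expect the only real obstacle to be identifying the right invariant; once the mod-$4$ residue is seen to be the pivotal quantity, the argument collapses to the elementary facts that $2^m\equiv 3\pmod 4$ is never satisfied and $2^m\equiv 1\pmod 4$ holds only for $m=0$. The remaining case split on whether $c=d$ or $c=1-d$ is routine bookkeeping.
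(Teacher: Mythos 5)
Your proposal is correct and follows essentially the same route as the paper: both arguments reduce the claim to showing at most one adjacent collision $\text{pos}(x_i)=\text{pos}(x_{i+1})$, and both do so by tracking the within-block slot of each $x_i$ (your residue mod $4$ is exactly the paper's ``third or fourth position in the block'') and observing that a collision locks all subsequent elements into the later slot because $2^m\equiv 3\pmod 4$ is impossible. The only cosmetic difference is that the paper disposes of one of your two cases by a without-loss-of-generality reduction.
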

\begin{proof}
Without loss of generality, assume $c=1$. The proofs for $d=0$ and $d=1$ are analogous; therefore, we will further assume $ d=0$. Since $\{\text{pos}(x_i)\}^k_{i=1}$ is a non-decreasing sequence, to prove it has at least $k-1$ distinct elements, it suffices to show that $\text{pos}(x_j)=\text{pos}(x_{j+1})$ is true for at most one index. If it exists, let $j$ be such an index. For each $1\le i\le k$, the conditions $s^{(1)}_{x_i}=1$ and $s_{\text{pos}(x_i)}=0$ imply that $s^{(1)}_{x_i}$ corresponds to the third or the fourth position in the block $0011$. Furthermore, if $s^{(1)}_{x_i}$ corresponds to the fourth position in the block $0011$, then $s^{(1)}_{x_{i+1}}$ must also correspond to the fourth position. This is because if it corresponds to the third position, then the difference between $x_{i+1}$ and $x_i$ would be odd, implying it to be $1$, but this is clearly not possible. For an index $j$ to satisfy $\text{pos}(x_j)=\text{pos}(x_{j+1})$, we must have $s^{(1)}_{x_j}$ and $s^{(1)}_{x_{j+1}}$ correspond to the third and fourth position in the block $0011$ respectively. This means that all indices at least $j+1$ must correspond to the fourth position in the block, thus enforcing the impossibility of another index $i$ satisfying $\text{pos}(x_i)=\text{pos}(x_{i+1})$.
\end{proof}


Corollaries \ref{A B split cor} and \ref{pos bound cor} show that the process of expansion naturally induces a mapping from monochromatic diffsequences in an expanded string and diffsequences with one hop in the original string. Motivated by this approach, in Lemma~\ref{MainLemma}, we inductively extend this reasoning to obtain a mapping from elements in $\Psi_{\mathbf{s}^{(1)}}(h)$ to $\Psi_{\mathbf{s}}(h+1)$ for some string $\mathbf{s}$. This will help us obtain a bound on the maximum length of a monochromatic diffsequence in our explicit construction.


\section{Main result}\label{MainResult}
To prove the main result of this work, Theorem~\ref{thm: MainThm}, we explicitly need to show that the construction in Section~\ref{sec: construction} does not admit any monochromatic $D$-diffsequences of length $k$. For any string $\mathbf{s}$ and its expansion $\mathbf{s}^{(1)}$, Lemma~\ref{MainLemma} gives a bound on the difference between the maximum length of a diffsequence with at most $h+1$ hops in $\mathbf{s}$, $\psi_{\mathbf{s}}(h+1)$, and the
maximum length of a diffsequence with $h$ hops in the expanded string $\mathbf{s}^{(1)}$, $\psi_{\mathbf{s}^{(1)}}(h)$ (Definition~\ref{def: Psi_psi}). Formally, for any binary string $\mathbf{s}$, Lemma~\ref{MainLemma} shows that $\psi_{\mathbf{s}^{(1)}}(h) \le \psi_{\mathbf{s}}(h+1) + 3h + 2$. Recall that in our explicit construction, we begin with the initial coloring $\chi_{\text{alt}} \colon [l] \to \{0,1\}$ such that $\chi_\text{alt}(i) = i \mod{2}$. This corresponds to the binary string $\mathbf{s} = 1010 \ldots$ of length $l$. This string is such that the diffsequence $\{i\}_{i=1}^{l}$ has $l-1$ hops, i.e. $\{i\}_{i=1}^{l} \in \Psi_{\mathbf{s}}(l-1)$ and $\psi_{\mathbf{s}}(l-1) = l$. Therefore, using Lemma~\ref{MainLemma} recursively, we can show that $\psi_{\mathbf{s}^{(l-1)}}(0) \le (3l^2-3l+2)/2$. That is, we show that the maximum possible length of a monochromatic diffsequence in $\mathbf{s}^{(l-1)}$ is at most $(3l^2-3l+2)/2$. The proof of Lemma~\ref{MainLemma} uses induction on the number of hops, exploiting the fact that diffsequences with $h$ hops, i.e., diffsequences in $\Psi_{\mathbf{s}^{(1)}}(h)$, are a concatenation of diffsequences with $h-1$ hops and monochromatic diffsequences. For an expanded string, we can therefore inductively apply Corollaries \ref{A B split cor} and \ref{pos bound cor}.

\begin{lemma}\label{MainLemma}
Let $\mathbf{s}$ be a binary string and let $\mathbf{s}^{(1)}$ denote the expansion of $\mathbf{s}$. Then 
\begin{equation}
    \psi_{\mathbf{s}^{(1)}}(h) \le \psi_{\mathbf{s}}(h+1) + 3h + 2. 
\end{equation}

\end{lemma}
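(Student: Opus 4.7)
The plan is to proceed by induction on $h$, establishing the stronger claim that the image of any $\mathcal{A}\in\Psi_{\mathbf{s}^{(1)}}(h)$ under the map $\text{pos}$, with consecutive duplicates deleted, lies in $\Psi_{\mathbf{s}}(h+1)$ and has length at least $|\mathcal{A}|-3h-2$. That this image is automatically a $D$-diffsequence in $\mathbf{s}$ is immediate from the fact, already recorded in the preliminaries, that $y-x\in D$ forces $\text{pos}(y)-\text{pos}(x)\in D\cup\{0\}$; so the real work lies in controlling both the length loss and the number of hops.

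For the base case $h=0$, given a monochromatic diffsequence $\mathcal{A}=\{x_i\}_{i=1}^k$ of color $c$ in $\mathbf{s}^{(1)}$, Corollary~\ref{A B split cor} yields an $m\in\{0,\ldots,k\}$ splitting the image $\mathbf{s}$-colors as $(1-c)^m c^{k-m}$. Applying Corollary~\ref{pos bound cor} to each monochromatic piece gives at least $\max(m-1,0)+\max(k-m-1,0)\ge k-2$ distinct positions, where Proposition~\ref{prop: reduce_diffseq} ensures the two pieces occupy disjoint indices when both are non-empty and that the single color switch occurs with gap exactly $1$ (hence as a legitimate hop). So the image lies in $\Psi_{\mathbf{s}}(1)$ with length at least $k-2$, which gives $\psi_{\mathbf{s}^{(1)}}(0)\le\psi_{\mathbf{s}}(1)+2$.

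For the inductive step, assume the claim for $h-1$ and let $\mathcal{A}=\{x_i\}_{i=1}^k$ have at most $h$ hops. If it has strictly fewer than $h$, apply the inductive hypothesis directly; otherwise let $j$ be the first hop and split $\mathcal{A}$ into the monochromatic prefix $\mathcal{A}_1=x_1,\ldots,x_j$ and the suffix $\mathcal{A}_2=x_{j+1},\ldots,x_k$ which has exactly $h-1$ hops. The base case applied to $\mathcal{A}_1$ gives an image $\mathcal{A}_1'\in\Psi_{\mathbf{s}}(1)$ of length at least $j-2$, and the inductive hypothesis applied to $\mathcal{A}_2$ gives an image $\mathcal{A}_2'\in\Psi_{\mathbf{s}}(h)$ of length at least $(k-j)-(3h-1)$. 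Concatenating these images (and removing at most one overlapping position at the junction) produces a sequence of length at least $k-3h-2$.

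The main obstacle is the boundary analysis at the hop $j$: one needs to verify that concatenation introduces no new hop and costs at most one element. Since $x_{j+1}-x_j=1$ and $s^{(1)}_{x_j}\ne s^{(1)}_{x_{j+1}}$, a brief case analysis on block positions shows the pair $(x_j,x_{j+1})$ must lie either at positions $2,3$ of a common block (so $\text{pos}(x_j)=\text{pos}(x_{j+1})$) or at positions $4,1$ of consecutive blocks (so $\text{pos}(x_{j+1})=\text{pos}(x_j)+1$). In the first case the $\mathbf{s}$-colors coincide trivially and one duplicate position must be deleted; in the second case a hop in $\mathbf{s}^{(1)}$ at a block boundary forces the two adjacent blocks to be of the same type (since $0011$ and $1100$ are conjugates), which again makes $s_{\text{pos}(x_j)}=s_{\text{pos}(x_{j+1})}$ with no deletion needed. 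In both scenarios the junction creates no new hop, so the concatenation has at most $1+h=h+1$ hops, completing the induction.
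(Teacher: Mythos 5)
Your proof is correct and follows essentially the same route as the paper's: the identical base case via Corollaries~\ref{A B split cor} and~\ref{pos bound cor}, the same split at the first hop into a monochromatic prefix and a suffix with $h-1$ hops, and the same accounting $(j-2)+\bigl((k-j)-3(h-1)-2\bigr)-1=k-3h-2$ for the single possible overlap at the junction. Your added boundary analysis at the hop (showing the junction contributes no new hop in $\mathbf{s}$, so the image genuinely lies in $\Psi_{\mathbf{s}}(h+1)$) is a detail the paper's proof leaves implicit, and it is a worthwhile addition rather than a deviation.
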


\begin{proof}
Using induction on $h$, we will show that for any diffsequence in $\Psi_{\mathbf{s^{(1)}}}(h)$ of length $\ell$, there is a corresponding diffsequence in $\Psi_{\mathbf{s}}(h+1)$ with length at least $\ell-3h-2$. To prove the base case, $h=0$, let $ \mathcal{P}_0 =\{ p_i\}_{i=1}^{\ell} \in \Psi_{\mathbf{s}^{(1)}}(0)$ be a monochromatic diffsequence in $\mathbf{s}^{(1)}$ of length $\ell$. Without loss of generality, assume that $ \mathcal{P}_0$ has color $c=1$ in $\mathbf{s}^{(1)}$, i.e., $s^{(1)}_{p_i} = 1$, for $1 \le i \le \ell$. Now, consider the diffsequence $\text{pos}(\mathcal{P}_0) = \{\text{pos}(p_i)\}_{i=1}^{\ell}$ in $\mathbf{s}$. By Corollary~\ref{A B split cor}, we know that the substring $\{s_{\text{pos}(x_i)}\}_{i=1}^{\ell}$ can be split into at most two contiguous monochromatic substrings, the first with the color $1-c=0$ and the other with $c=1$. Let $\mathcal{P}_{\text{left}}$ and $\mathcal{P}_{\text{right}}$ be (possibly empty) subsequences of $ \mathcal{P}_0$ such that $\text{pos}(\mathcal{P}_{\text{left}})$ has color $0$ in $\mathbf{s}$, and $\text{pos}(\mathcal{P}_{\text{right}})$ has color $1$ in $\mathbf{s}$. Using Corollary~\ref{pos bound cor}, note that $|\text{pos}(\mathcal{P}_{\text{left}})|\ge|\mathcal{P}_{\text{left}}|-1$ and $|\text{pos}(\mathcal{P}_{\text{right}})|\ge|\mathcal{P}_{\text{right}}|-1$. Because $\text{pos}(\mathcal{P}_0)$ is the union of $\text{pos}(\mathcal{P}_{\text{left}})$ and $\text{pos}(\mathcal{P}_{\text{right}})$, and both $\text{pos}(\mathcal{P}_{\text{left}})$ and $\text{pos}(\mathcal{P}_{\text{right}})$ are disjoint, we get that $|\text{pos}(\mathcal{P}_0)|=|\text{pos}(\mathcal{P}_{\text{left}})|+|\text{pos}(\mathcal{P}_{\text{right}})|\ge |\mathcal{P}_{\text{left}}|-1+|\mathcal{P}_{\text{left}}|-1=|\mathcal{P}|-2$.

To prove the inductive hypothesis, assume that the result is true for all $k\leq h-1$. Let $\mathcal{P}_h=\{p_i\}^\ell_{i=1}$ be a diffsequence in $\mathbf{s}^{(1)}$ with at most $h$ hops. If $\mathcal{P}_h$ has at most $h-1$ hops, i.e. $\mathcal{P}_h \in \Psi_{\mathbf{s}^{(1)}}(h-1)$, then using the inductive hypothesis for $k = h-1$, we are done. Now, let us consider the case where $\mathcal{P}_h$ has exactly $h \ge 1$ hops. Let $p_i\to p_{i+1}$ be the first time there is a hop in $\mathbf{s}^{(1)}$. Let $\mathcal{P}_{\text{left}}$ be the first $i$ elements of $\mathcal{P}_h$ and $\mathcal{P}_{\text{right}}$ be the remaining part. Analogous to the proof of the base case, we split $\mathcal{P}_h$ into two contiguous subsequences $\mathcal{P}_{\text{left}}$ and $\mathcal{P}_{\text{right}}$ such that the hop between $\mathcal{P}_{\text{left}}$ and $\mathcal{P}_{\text{right}}$ is the first hop in $\mathcal{P}_h$. This means that $\mathcal{P}_{\text{left}}$ is monochromatic in $\mathbf{s}^{(1)}$ and $\mathcal{P}_{\text{right}}$ has at most $h-1$ hops in $\mathbf{s}^{(1)}$. Notice that $\text{pos}(\mathcal{P}_h)$ is the union of $\text{pos}(\mathcal{P}_{\text{left}})$ and $\text{pos}(\mathcal{P}_{\text{right}})$. Furthermore, $\text{pos}(\mathcal{P}_{\text{left}})$ and $\text{pos}(\mathcal{P}_{\text{right}})$ intersect in at most one element. Hence, we can lower bound $|\mathcal{P}_h|$ by $|\text{pos}(\mathcal{P}_{\text{left}})|+|\text{pos}(\mathcal{P}_{\text{right}})|-1$. As $\mathcal{P}_{\text{left}}\in \Psi_{\mathbf{s}^{(1)}}(0)$ and $\mathcal{P}_{\text{right}}\in \Psi_{\mathbf{s}^{(1)}}(h-1)$, we can apply the induction hypothesis to both $\mathcal{P}_{\text{left}}$ and $\mathcal{P}_{\text{right}}$ to obtain the following. $|\text{pos}(\mathcal{P}_h)| \ge |\text{pos}(\mathcal{P}_{\text{left}})|+|\text{pos}(\mathcal{P}_{\text{right}})|-1 \ge |(\mathcal{P}_{\text{left}})|-2+|(\mathcal{P}_{\text{right}})|-3(h-1)-2-1 \ge |\mathcal{P}_h|-3h-2.$
\end{proof}
\noindent
\textbf{Proof of Theorem 1.} 
We now explicitly construct a coloring of length $l \cdot4^{l-1}$ with the length of the maximum diffsequence at most $(3l^2-3l+2)/2$. We start with the binary string $\mathbf{s}$ of length $l$ consisting of alternating ones and zeros, i.e, the one corresponding to the coloring $\chi(i)\equiv i\pmod{2}$. This ensures $\psi_{\mathbf{s}}(l-1)=l$. By Lemma~\ref{MainLemma}, $\psi_{\mathbf{s}^{(i)}}(l-1-i)\le \psi_{\mathbf{s}^{(i-1)}}(l-1-i+1)+3(l-i-1)+2$ for all $1\le i\le l-1$.
So, summing over all $1\le i\le l-1$ we get
\begin{equation*}
    \psi_{\mathbf{s}^{(l-1)}}(0)\le l+\frac{3(l-1)(l-2)}{2}+2(l-1)=\frac{3l^2-3l+2}{2}.
\end{equation*}
For any positive integer $k>\frac{3l^2-3l+2}{2}$, we obtain a coloring of length $l\cdot 4^{l-1}$ that avoids a monochromatic diffsequence of length $k$, therefore, implying that $\Delta(D,k)\ge l\cdot4^{l-1}$. Hence, choosing the largest such $l=\left\lfloor\sqrt{\frac{2(k-1)}{3}+\frac{1}{4}}+\frac{1}{2}\right\rfloor$, we obtain $\Delta(D,k)\ge\left(\sqrt{\frac{8k-5}{12}}-\frac12\right)2^{\left(\sqrt{\frac{8k-5}{3}}-3\right)}$.

\end{document}